\newtheorem{propo}{{\bf Proposition}}[section]
\newtheorem{coro}[propo]{{\bf Corollary}}
\newtheorem{lemma}[propo]{{\bf Lemma}} \newtheorem{theor}[propo]{{\bf
Theorem}} \newtheorem{ex}{{\sc Example}}[section]
\newenvironment{proof}{{\bf Proof.}}{$\Box$}
\begin{document}

\vspace*{1.0in}

\begin{center} ON CERTAIN DECOMPOSITIONS OF SOLVABLE LIE ALGEBRAS  
\end{center}
\bigskip

\begin{center} DAVID A. TOWERS 
\end{center}
\bigskip

\begin{center} Department of Mathematics and Statistics

Lancaster University

Lancaster LA1 4YF

England

d.towers@lancaster.ac.uk 
\end{center}
\bigskip

\begin{abstract}
The author has previously shown that solvable Lie $A$-algebras and complemented solvable Lie algebras decompose as a vector space direct sum of abelian subalgebras, and their ideals relate nicely to this decomposition. However, neither of these classes is contained in the other. The object of this paper is to find a larger class of algebras, containing each of these classes, in which these same results hold. 
\medskip
 
\noindent {\em Mathematics Subject Classification 2000}: 17B05, 17B20, 17B30, 17B50.
\par
\noindent {\em Key Words and Phrases}: Lie algebras, c-supplemented subalgebras, completely factorisable algebras, Frattini ideal, subalgebras of codimension one. 
\end{abstract}
\section{Introduction}
Throughout $L$ will denote a finite-dimensional solvable Lie algebra over a field $F$. Then $L$ is an {\em $A$-algebra} if all of its nilpotent subalgebras are abelian. We say that $L$ is {\em complemented} if, for any subalgebra $S$ of $L$, there is a subalgebra $T$ such that $S \cap T = 0$ and $\langle S,T \rangle = L$. In \cite{Aalg} and \cite{comp} it was shown that the algebras in each of these classes decompose as a vector space direct sum of abelian subalgebras, and their ideals relate nicely to this decomposition.
\par

 More precisely, it was shown that they split over each term in their derived series. This leads to a decomposition of $L$ as $L = A_{n} \dot{+} A_{n-1} \dot{+} \ldots \dot{+} A_0$ where $A_i$ is an abelian subalgebra of $L$ and $L^{(i)} = A_{n} \dot{+} A_{n-1} \dot{+} \ldots \dot{+} A_{i}$ for each $0 \leq i \leq n$. It is shown that the ideals of $L$ relate nicely to this decomposition: if $K$ is an ideal of $L$ then $K = (K \cap A_n) \dot{+} (K \cap A_{n-1}) \dot{+} \ldots \dot{+} (K \cap A_0)$. However, \cite[Examples 3.8 and 3.9]{comp} showed that neither of these classes was contained in the other. It is natural to ask, therefore, if we can find a larger class of algebras, containing each of these classes, in which these same results hold. We show that the class $q{\mathcal A}$ defined in Section 3 satisfies this. 
\par

We define the {\em nilpotent residual}, $L^{\infty}$, of $L$ be the smallest ideal of $L$ such that $L/L^{\infty}$ is nilpotent. Clearly this is the intersection of the terms of the lower central series for $L$. Then the {\em lower nilpotent series} for $L$ is the sequence of ideals $L_i$ of $L$ defined by $L_0 = L$, $L_{i+1} = (L_i)^{\infty}$ for $i \geq 0$. The {\em derived series} for $L$ is the sequence of ideals $L^{(i)}$ of $L$ defined by $L^{(0)} = L$, $L^{(i+1)} = [L^{(i)},L^{(i)}]$ for $i \geq 0$; we will also write $L^2$ for $L^{(1)}$. If $L^{(n)} = 0$ but $L^{(n-1)} \neq 0$ we say that that $L$ has {\em derived length} $n$. 
\par

We shall denote the nilradical of $L$ by $N(L)$ and the centre of $L$ by $Z(L)$. If $A$ is a subalgebra of $L$ and $B$ is an ideal of $A$, the {\em centraliser} of $A/B$ in $L$ is the set $Z_L(A/B) = \{ x \in L : [x,A] \subseteq B \}$. The {\em Frattini subalgebra} of $L$, $\phi(L)$, is the intersection of the maximal subalgebras of $L$. When $L$ is solvable this is always an ideal of $L$, by \cite[Lemma 3.4]{bg}. Algebra direct sums will be denoted by $\oplus$, whereas direct sums of the vector space structure alone will be denoted by $\dot{+}$.

\section{Splitting over the derived series}
Let ${\mathcal S}_n$ denote the class of solvable Lie algebras that split over $L^{(n)}$. A class ${\mathcal H}$ of finite-dimensional solvable Lie algebras is called a {\em homomorph}
if it contains, along with an algebra $L$, all epimorphic images of $L$.

\begin{lemma}\label{l:hom} ${\mathcal S_n}$ is a homomorph.
\end{lemma}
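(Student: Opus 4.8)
The plan is to establish the equivalent assertion that if $L\in\mathcal{S}_n$ and $K$ is an ideal of $L$ then $L/K\in\mathcal{S}_n$; only the splitting needs attention, since quotients of solvable algebras are solvable. I would first record the routine fact that epimorphisms commute with the derived series, so that $(L/K)^{(n)}=(L^{(n)}+K)/K$; thus the target is a subalgebra of $L/K$ complementing $(L^{(n)}+K)/K$. Fix a complement $C$ with $L=L^{(n)}\,\dot{+}\,C$, and note that the projection $\pi\colon L\to C$ with kernel $L^{(n)}$ is an algebra homomorphism, because $L^{(n)}$ is an ideal and $C$ a subalgebra.

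Two reductions come next. Putting $K_0=K\cap L^{(n)}$, which is an ideal of $L$ contained in $L^{(n)}$, the modular law shows that $(C+K_0)/K_0$ complements $L^{(n)}/K_0=(L/K_0)^{(n)}$ in $L/K_0$, so $L/K_0\in\mathcal{S}_n$; and since $L/K\cong(L/K_0)/(K/K_0)$ with $K/K_0$ meeting $(L/K_0)^{(n)}$ trivially, we may replace $L$ by $L/K_0$ and so assume $K\cap L^{(n)}=0$. Then $[L^{(n)},K]\subseteq L^{(n)}\cap K=0$, so $L^{(n)}+K=L^{(n)}\oplus K$ is an algebra direct sum, and $\pi$ restricts to an isomorphism of $K$ onto an ideal $D=\pi(K)$ of $C$. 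A short computation (using $K\cap L^{(n)}=0$) then gives: $L/K$ splits over $(L^{(n)}+K)/K$ if and only if $L^{(n)}$ has a complement in $L$ that contains $K$. So the lemma reduces to producing, from the given complement $C$ and the ideal $K$ with $K\cap L^{(n)}=0$, a complement of $L^{(n)}$ in $L$ containing $K$.

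For this I would induct on $\dim K$, which reduces to the case where $K$ is a minimal, hence abelian, ideal of $L$ with $K\cap L^{(n)}=0$: if $K_1\subseteq K$ is a minimal ideal of $L$ and a complement $B_1$ of $L^{(n)}$ containing $K_1$ has been found, then $K/K_1$ still meets $(L/K_1)^{(n)}$ trivially and has smaller dimension, so the inductive hypothesis applies in $L/K_1$ and one pulls back. In the base case, $K$ and $D=\pi(K)$ are two complements of $L^{(n)}$ inside $L^{(n)}\oplus K$, with $D$ the graph of a homomorphism $K\to L^{(n)}$, and the aim is to move $C$ off $D$ and onto $K$. If $K\not\subseteq\phi(L)$, then $K$ is complemented in $L$ by a maximal subalgebra, and I would invoke the Lie-algebra analogue of Gasch\"utz's conjugacy theorem for complements (as developed in Barnes' work on the cohomology of soluble Lie algebras) to find an automorphism of $L$ fixing $L^{(n)}$ and carrying $D$ to $K$; its image of $C$ is then the required complement. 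If $K\subseteq\phi(L)$, then since $\pi$ sends the Frattini ideal into the Frattini ideal we get $D\subseteq\phi(C)$, and this sub-case would be handled separately, e.g.\ by showing that a splitting of $L/\phi(L)$ over its $n$-th derived subalgebra lifts to a splitting of $L$ over $L^{(n)}$, using that $\phi(L)$ consists of non-generators.

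I expect the main obstacle to be precisely this last step --- re-choosing the complement of $L^{(n)}$ so that it absorbs $K$. The complements of $L^{(n)}$ in $L$ are parametrised by the solutions of a (generally non-abelian) cocycle equation for the action of $C$ on $L^{(n)}$, and one has to promote the prescribed solution on $D$ to a global one; this, together with the Frattini sub-case, is where the real content of the lemma lies.
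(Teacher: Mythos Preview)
The paper's ``proof'' of this lemma is the single sentence ``This is straightforward.'' No argument is given, so there is essentially nothing to compare against at the level of detail you have supplied.

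That said, your proposal is not a proof but a plan with an explicitly acknowledged gap, and the gap is real. Your reductions are correct: one may assume $K\cap L^{(n)}=0$, whence $[K,L^{(n)}]=0$ and the problem becomes that of producing a complement of $L^{(n)}$ in $L$ that contains $K$; and one may further reduce to $K$ a minimal ideal. But the proposed endgame does not go through as stated. The Gasch\"utz-type conjugacy results you allude to concern complements of an abelian ideal inside a fixed ambient algebra; here you want an automorphism of $L$ that fixes $L^{(n)}$ setwise and carries the ideal $D=\pi(K)$ of $C$ onto $K$, and it is not clear why such an automorphism should exist --- you would need to extend the $B$-module map $\phi\colon D\to L^{(n)}$ (whose graph is $K$) to a suitable map on all of $C$, which is precisely the cocycle problem you flag at the end. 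The Frattini sub-case is likewise only sketched.

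A more direct line, closer in spirit to what the author presumably has in mind, is this: with $K\cap L^{(n)}=0$, set $D=\pi_B(K)$, an ideal of $B$; one checks that $B\cap(L^{(n)}+K)=D$, so any subalgebra $E\subseteq B$ complementing $D$ in $B$ has image $(E+K)/K$ complementing $(L/K)^{(n)}$ in $L/K$. Thus the whole lemma reduces to showing that $B$ splits over the ideal $D$. For $n=1$ this is immediate ($B$ is abelian, so any vector-space complement of $D$ works), and this already handles the case used later in the paper via Lemma~\ref{l:hom2} and Theorem~\ref{t:compsplit}. For general $n$ one still has to argue that $B$ splits over $D$, and this is where any genuine content lies; your cohomological instincts are aimed at the right target, but the argument needs to be made inside $B$ (where $D$ is an ideal of a solvable algebra with $B^{(n)}=0$), not via automorphisms of $L$.
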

\begin{proof} This is straightforward.
\end{proof}
\bigskip

Then we have the following properties of  ${\mathcal S}_n$.

\begin{theor}\label{t:split} Let $L$ be a solvable Lie algebra. Consider the following properties.
\begin{itemize}
\item[(i)] $L  \in  {\mathcal S}_n$;
\item[(ii)] $[L^{(n)}, L^{(n-1)}] = L^{(n)}$; and
\item[(iii)] the Cartan subalgebras of $L^{(n-1)}/L^{(n+1)}$ are precisely the complements in $L^{(n-1)}/L^{(n+1)}$ of $L^{(n)}/L^{(n+1)}$ .
\end{itemize}
Then $(i) \Rightarrow (ii) \Rightarrow (iii)$. Moreover, if $L^{(n+1)} = 0$ then all three statements are equivalent.
\end{theor}
\begin{proof} (i) $\Rightarrow$ (ii): Suppose that $L = L^{(n)} \dot{+} B$ for some subalgebra $B$ of $L$. Then $L^{(n-1)} = L^{(n)} \dot{+} B^{(n-1)}$, so
\[
[L^{(n-1)}, L^{(n)}] = L^{(n+1)} + [B^{(n-1)}, L^{(n)}] = [L^{(n)} + B^{(n-1)}, L^{(n)} + B^{(n-1)}] = L^{(n)},
\]
since $B^{(n)} \subseteq L^{(n)} \cap B = 0$.
\medskip

\noindent (ii) $\Rightarrow$ (iii):  From (i) we have that $(L^{(n-1)})^{\infty} = L^{(n)}$, so the result follows from \cite[Theorem 4.4.1.1]{wint}.
\medskip

So suppose now that $L^{(n+1)} = 0$ and that (iii) holds.  Let $C$ be a Cartan subalgebra of $L^{(n-1)}$ and let $L = {\mathcal L}_0 \dot{+} {\mathcal L}_1$ be the Fitting decomposition of $L$ relative to ad\,$C$. Then ${\mathcal L}_1 = \cap_{k=1}^{\infty} L({\rm ad}C)^k \subseteq L^{(n)}$ which is abelian, and so ${\mathcal L}_1$ is an ideal of $L$. Also $L^{(n-1)} = {\mathcal L}_1 \dot{+} {\mathcal L}_0 \cap L^{(n-1)} = {\mathcal L}_1 \dot{+} C$. Now $C$ is abelian, by (iii). But $C \cong L^{(n-1)}/{\mathcal L}_1$, so $L^{(n)} \subseteq {\mathcal L}_1$, whence ${\mathcal L}_1 = L^{(n)}$ and $L = L^{(n)} \dot{+} {\mathcal L}_0$.
\end{proof}
\bigskip

Let ${\mathcal S}_{\infty}$ denote the class of solvable Lie algebras that split over each term in their derived series. 

\begin{lemma}\label{l:hom2} ${\mathcal S_{\infty}}$ is a homomorph.
\end{lemma}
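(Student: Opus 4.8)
The plan is to deduce this directly from Lemma \ref{l:hom} by recognising $\mathcal{S}_{\infty}$ as an intersection of the classes $\mathcal{S}_n$. First I would observe that for a solvable Lie algebra $L$ of derived length $d$ the derived series consists of the ideals $L = L^{(0)}, L^{(1)}, \ldots, L^{(d)} = 0$, and every further term $L^{(n)}$ with $n \geq d$ is zero. Now splitting over $L^{(0)} = L$ is witnessed by the zero subalgebra, and splitting over $L^{(n)} = 0$ (for $n \geq d$) is witnessed by $L$ itself. Hence $L \in \mathcal{S}_{\infty}$ precisely when $L \in \mathcal{S}_n$ for every $n \geq 0$; that is, $\mathcal{S}_{\infty} = \bigcap_{n \geq 0} \mathcal{S}_n$.

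Next I would invoke the elementary fact that an intersection of homomorphs is again a homomorph: if $L$ lies in $\mathcal{S}_n$ for every $n$ and $M$ is an epimorphic image of $L$, then $M$ is solvable (being a quotient of a solvable Lie algebra) and, by Lemma \ref{l:hom}, $M \in \mathcal{S}_n$ for every $n$; therefore $M \in \bigcap_{n \geq 0} \mathcal{S}_n = \mathcal{S}_{\infty}$. This is exactly what is required.

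There is essentially no genuine obstacle here; the only point that warrants a moment's care is the identification $\mathcal{S}_{\infty} = \bigcap_{n} \mathcal{S}_n$, which rests on the observations that only finitely many terms of the derived series are non-trivial and that splitting over $L$ itself and over $0$ is automatic. Everything else follows immediately from Lemma \ref{l:hom} together with the closure of solvability under quotients.
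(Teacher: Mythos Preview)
Your argument is correct: the identification $\mathcal{S}_{\infty}=\bigcap_{n\geq 0}\mathcal{S}_n$ is valid for the reasons you give, and the closure of homomorphs under intersection together with Lemma~\ref{l:hom} then yields the result immediately. The paper's own proof is simply the word ``straightforward'', so your approach is a perfectly acceptable way of unpacking that and is entirely in line with what the author intends.
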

\begin{proof} This is straightforward.
\end{proof}
\bigskip

We have the following characterisation of ${\mathcal S}_{\infty}$.

\begin{theor}\label{t:compsplit} Let $L$ be a solvable Lie algebra of derived length $n+1$. Then the following are equivalent:
\begin{itemize}
\item[(i)] $L \in {\mathcal S}_{\infty}$;
\item[(ii)] $[L^{(i)}, L^{(i-1)}] = L^{(i)}$ for every $1 \leq i \leq n$;
\item[(iii)] the Cartan subalgebras of $L^{(i-1)}/L^{(i+1)}$ are precisely the subalgebras that are complementary to $L^{(i)}/L^{(i+1)}$ for $1 \leq i \leq n$;
\item[(iv)] the lower nilpotent series for $L$ coincides with the derived series for $L$; and
\item[(v)] every factor algebra in the lower nilpotent series for $L$ is abelian.
\end{itemize}
\end{theor}
\begin{proof} (i) $\Rightarrow$ (ii): This follows from Theorem \ref{t:split}.
\medskip

\noindent (ii) $\Rightarrow$ (iii): This also follows from Theorem \ref{t:split}.
\medskip

\noindent (iii) $\Rightarrow$ (i): It follows from (iii) and Theorem \ref{t:split} that $L$ splits over $L^{(n)}$. So we have that $L = L^{(n)} \dot{+} B$ where $B$ is a subalgebra of $L$. Clearly $B^{(n)} = 0$ and $B \cong L/L^{(n)}$, and so, by the same argument, $B$ splits over $B^{(n-1)}$, say $B = B^{(n-1)} \dot{+} D$. But then $L = L^{(n)} \dot{+} (B^{(n-1)} \dot{+} D) = L^{(n-1)} \dot{+} D$. Continuing in this way gives the desired result.
\medskip

\noindent (ii) $\Rightarrow$ (iv): We require that $L_i = L^{(i)}$ for each $i \geq 0$. We use induction. Clearly $L_0 = L^{(0)} = L$.
Suppose that $L_k = L^{(k)}$ for some $k \geq 0$. Then $L^{(k)}/L^{(k+1)}$ is abelian and so $L_{k+1} \subseteq L^{(k+1)}$. Moreover,
$L^{(k+1)} = [L^{(k)}, L^{(k+1)}] \subseteq [L_k, L_k] = L_k^2$, whence $L^{(k+1)} = [L^{(k)}, L^{(k+1)}] \subseteq [L_k, L_k^2] = L_k^3$ and a straightforward induction proof shows that $L^{(k+1)} \subseteq (L_k)^{\infty} = L_{k+1}$, and (iv) follows.
\medskip

\noindent (iv) $\Rightarrow$ (v): This is clear. 
\medskip

\noindent (v) $\Rightarrow$ (iv): We proceed as  above. We require that $L_i = L^{(i)}$ for each $i \geq 0$. We use induction. Clearly $L_0 = L^{(0)} = L$. Suppose that $L_k = L^{(k)}$ for some $k \geq 0$. Then $L^{(k)}/L^{(k+1)}$ is abelian and so $L_{k+1} \subseteq L^{(k+1)}$. Moreover, $L^{(k)}/L_{k+1} = L_{k}/L_{k+1}$ is abelian and so $L^{(k+1)} \subseteq L_{k+1}$, and (iv) follows.
\medskip

\noindent (iv) $\Rightarrow$ (ii): Suppose that (iv) holds. Then $$[L^{(i)}, L^{(i-1)}] = [L_{i}, L_{i-1}] = [(L_{i-1})^{\infty}, L_{i-1}] = (L_{i-1})^{\infty} = L_i = L^{(i)}.$$
\end{proof}
\bigskip

This yields the next result, which is analogous to \cite[Corollary 3.2]{Aalg} and \cite[Corollary 3.3]{comp}.

\begin{coro}\label{c:decomp} 
Let $L \in {\mathcal S}_{\infty}$ be a solvable Lie algebra of derived length $n+1$. Then the following hold:
\begin{itemize}
\item[(i)] $L = A_{n} \dot{+} A_{n-1} \dot{+} \ldots \dot{+} A_0$ where $A_i$ is an abelian subalgebra of $L$ for each $0 \leq i \leq n$; and
\item[(ii)] $L^{(i)} = A_{n} \dot{+} A_{n-1} \dot{+} \ldots \dot{+} A_{i}$ for each $0 \leq i \leq n$
\end{itemize}
\end{coro}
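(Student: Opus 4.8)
The plan is to prove both statements simultaneously by induction on the derived length $n+1$ of $L$, peeling off the abelian ideal $A_n = L^{(n)}$ at the top; this mirrors the iteration already used in the proof of (iii) $\Rightarrow$ (i) of Theorem \ref{t:compsplit}. The base case $n+1 = 1$ is trivial: then $L$ is abelian, and we take $A_0 = L$, so that $L^{(0)} = L = A_0$.

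For the inductive step I would first use that $L \in {\mathcal S}_{\infty}$ splits over $L^{(n)}$, say $L = L^{(n)} \dot{+} B$ for a subalgebra $B$, and set $A_n = L^{(n)}$; this is an abelian ideal of $L$ because $[L^{(n)},L^{(n)}] = L^{(n+1)} = 0$ by the hypothesis on the derived length. The heart of the argument is then the identity
\[ L^{(i)} = L^{(n)} \dot{+} B^{(i)} \qquad (0 \le i \le n), \]
which I would establish by a short induction on $i$: assuming $L^{(i-1)} = L^{(n)} \dot{+} B^{(i-1)}$, expand $L^{(i)} = [L^{(i-1)},L^{(i-1)}]$ and use that $L^{(n)}$ is an abelian ideal to obtain $L^{(i)} \subseteq L^{(n)} + B^{(i)}$, while the reverse inclusion is immediate from $L^{(n)} \subseteq L^{(i)}$ and $B^{(i)} \subseteq L^{(i)}$; directness of the sum is inherited from $L^{(n)} \cap B = 0$. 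Two consequences follow: $B^{(n)} \subseteq L^{(n)} \cap B = 0$, whereas $B^{(n-1)} \cong L^{(n-1)}/L^{(n)} \neq 0$ (since $L^{(n-1)} = L^{(n)}$ would force $L^{(n)} = 0$), so $B$ has derived length exactly $n$; and $B \cong L/L^{(n)}$ lies in ${\mathcal S}_{\infty}$ because ${\mathcal S}_{\infty}$ is a homomorph by Lemma \ref{l:hom2}.

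Now I would apply the induction hypothesis to $B$ to obtain abelian subalgebras $A_{n-1}, \ldots, A_0$ of $B$, hence of $L$, with $B = A_{n-1} \dot{+} \ldots \dot{+} A_0$ and $B^{(i)} = A_{n-1} \dot{+} \ldots \dot{+} A_i$ for $0 \le i \le n-1$. Combining this with $L = L^{(n)} \dot{+} B$ yields $L = A_n \dot{+} A_{n-1} \dot{+} \ldots \dot{+} A_0$, which is (i), and $L^{(i)} = L^{(n)} \dot{+} B^{(i)} = A_n \dot{+} A_{n-1} \dot{+} \ldots \dot{+} A_i$ for $1 \le i \le n$, which together with the trivial cases $i = 0$ and $i = n$ gives (ii). I expect no serious obstacle: the only point needing care is the displayed identity and the consequent fact that $B$ has derived length exactly one less than $L$, which is what legitimises the induction; if desired, this could be isolated as a small lemma rather than proved inline.
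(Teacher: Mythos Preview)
Your proposal is correct and follows essentially the same approach as the paper: split off $A_n = L^{(n)}$, observe that the complement $B \cong L/L^{(n)}$ again lies in ${\mathcal S}_{\infty}$ by the homomorph property, and iterate. Your version is simply more explicit---you spell out the identity $L^{(i)} = L^{(n)} \dot{+} B^{(i)}$ and verify that $B$ has derived length exactly $n$---whereas the paper leaves these as ``continuing in this way'' and ``a straightforward induction''.
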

\begin{proof} By Theorem \ref{t:compsplit} there is a subalgebra $B_n$ of $L$ such that $L = L^{(n)} \dot{+} B_n$. Put $A_n = L^{(n)}$. Similarly $B_n \cong L/L^{(n)}$ satisfies the conditions of Theorem \ref{t:compsplit}, by Lemma \ref{l:hom}, so $B_n = A_{n-1} \dot{+} B_{n-1}$ where $A_{n-1} = B_n^{(n-1)}$. Continuing in this way we get (i). A straightforward induction proof shows (ii).
\end{proof}
\bigskip

Our final result in this section is a generalisation of \cite[Theorem3.3]{Aalg}.

\begin{lemma}\label{l:int} If $L \in {\mathcal S_{\infty}}$ then $Z(L^{(i)}) \cap L^{(i+1)} = 0$.
\end {lemma}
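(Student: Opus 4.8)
The plan is to prove this one derived step at a time, descending the derived series from the bottom. Write $m+1$ for the derived length of $L$; if $i\geq m$ then $L^{(i+1)}=0$ and there is nothing to prove, so assume $L^{(i+1)}\neq 0$. By Corollary \ref{c:decomp} we have $L^{(i)}=L^{(i+1)}\,\dot{+}\,A_i$ with $A_i$ abelian; in particular $L^{(i)}$ splits over its own derived algebra $(L^{(i)})^{(1)}=L^{(i+1)}$, so $L^{(i)}\in{\mathcal S}_1$. The first step is to apply Theorem \ref{t:split} to the solvable Lie algebra $L^{(i)}$, with the integer there called $n$ taken to be $1$: since $L^{(i)}\in{\mathcal S}_1$, part (iii) of that theorem gives that the Cartan subalgebras of $L^{(i)}/L^{(i+2)}$ are precisely the complements in $L^{(i)}/L^{(i+2)}$ of $L^{(i+1)}/L^{(i+2)}$.

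Next, write $\bar x$ for the image of $x$ under the projection $L^{(i)}\to L^{(i)}/L^{(i+2)}$. Since $L^{(i+2)}\subseteq L^{(i+1)}$, the subalgebra $\bar A_i$ is again a complement of $L^{(i+1)}/L^{(i+2)}$ in $L^{(i)}/L^{(i+2)}$, hence by the previous paragraph it is a Cartan subalgebra there, and so is self-normalising. Now take $z\in Z(L^{(i)})\cap L^{(i+1)}$. Then $\bar z$ is central in $L^{(i)}/L^{(i+2)}$, so $[\bar z,\bar A_i]=0\subseteq\bar A_i$, forcing $\bar z\in\bar A_i$; but $\bar z$ also lies in $L^{(i+1)}/L^{(i+2)}$, which meets $\bar A_i$ trivially. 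Hence $\bar z=0$, i.e. $z\in L^{(i+2)}$, and so $Z(L^{(i)})\cap L^{(i+1)}\subseteq L^{(i+2)}$. I would then finish by a downward induction on $i$ (equivalently, on the derived length of $L^{(i)}$): the base case $L^{(i+1)}=0$ is trivial, and in general the containment just obtained gives $Z(L^{(i)})\cap L^{(i+1)}=Z(L^{(i)})\cap L^{(i+2)}$, every element of which lies in $L^{(i+1)}$ and centralises $L^{(i)}\supseteq L^{(i+1)}$, hence lies in $Z(L^{(i+1)})\cap L^{(i+2)}$, which is zero by the inductive hypothesis.

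The main obstacle, I expect, is spotting the right reduction: one should factor out $L^{(i+2)}$, so that $L^{(i+1)}/L^{(i+2)}$ becomes the nilpotent residual of $L^{(i)}/L^{(i+2)}$ and Theorem \ref{t:split}(iii) becomes available, and then use the self-normalising property of Cartan subalgebras to trap a would-be central element of $L^{(i+1)}$ inside a complement of $L^{(i+1)}/L^{(i+2)}$, where it must vanish. Getting the re-indexing of Theorem \ref{t:split} right --- applying it to $L^{(i)}$ with its ``$n$'' equal to $1$ --- and checking that $L^{(i)}$ really lies in ${\mathcal S}_1$ are the points that need care; the remainder is routine bookkeeping with the derived series.
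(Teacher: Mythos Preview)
Your proof is correct and follows essentially the same approach as the paper: both use that the image of $A_i$ in $L^{(i)}/L^{(i+2)}$ is a Cartan subalgebra (hence self-normalising) to force a central element of $L^{(i)}$ lying in $L^{(i+1)}$ down into $L^{(i+2)}$. The paper then iterates this step explicitly through the decomposition $x=a_n+\cdots+a_{i+1}$, killing one component $a_j$ at a time, whereas you package the same iteration as a downward induction on $i$; the two arguments unroll to the same thing, though your formulation is arguably tidier. One minor remark: rather than noting $L^{(i)}\in\mathcal{S}_1$ and invoking Theorem~\ref{t:split}, you could cite Theorem~\ref{t:compsplit}(iii) directly (as the paper does), which already gives the Cartan\,=\,complement description for $L^{(i)}/L^{(i+2)}$.
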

\begin{proof} Clearly we can assume that $L^{(i+1)} \neq {0}$. We have that $L^{(i)} = L^{(i+1)} \dot{+} A_i$ from Corollary \ref{c:decomp}. Let $x = a_n + \ldots + a_{i+1} \in Z(L^{(i)}) \cap L^{(i+1)}$ where $a_j \in A_j$ for $i+1 \leq j \leq n$. Then $0 = [x, A_{i+1} + A_i]$, so $[a_{i+1}, A_{i+1} + A_i] \subseteq (A_{i+1} + A_i) \cap L^{(i+2)} = 0$. It follows that $a_{i+1} \in Z_{A_{i+1} + A_i}(A_i) = A_i$ since $A_i$ is complementary to $A_{i+1} \cong L^{(i+1)}/L^{(i+2)}$ in $A_{i+1} + A_i \cong L^{(i)}/L^{(i+2)}$ and so is a Cartan subalgebra of $A_{i+1} + A_i$, by Theorem \ref{t:compsplit}. Hence $a_{i+1} = 0$. But now $0 = [x, A_{i+2} + A_{i+1}]$, so $[a_{i+2}, A_{i+2} + A_{i+1}] \subseteq (A_{i+2} + A_{i+1}) \cap L^{(i+3)} = 0$, giving $a_{i+2} = 0$ as before. Continuing in this way shows that $x = 0$.
\end{proof}

\section{Quasi-$A$-algebras}
A solvable Lie algebra $L$ will be called a {\em quasi-$A$-algebra} if $N(L/B)$ is abelian for every ideal $B$ of $L$. We denote the class of all such algebras by $q{\mathcal A}$. Clearly $A$-algebras are quasi-$A$-algebras. The following lemma shows that so are solvable complemented Lie algebras.

\begin{lemma}\label{l:comp} Let $L$ be a solvable complemented Lie algebra. Then $L \in q{\mathcal A}$.
\end{lemma}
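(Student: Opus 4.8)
The plan is to establish the lemma in two stages: first reduce it, via closure of the class under homomorphic images, to the statement that a solvable complemented Lie algebra has abelian nilradical; then derive that statement from the vanishing of the Frattini ideal.

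For the reduction, I would first verify that a homomorphic image of a solvable complemented Lie algebra is again complemented. Let $B$ be an ideal of $L$, and let $\bar{S}=S/B$ be a subalgebra of $L/B$, where $B\subseteq S$. Pick a subalgebra $T$ of $L$ with $S\cap T=0$ and $\langle S,T\rangle=L$, and set $\bar{T}=(T+B)/B$, a subalgebra of $L/B$ since $T+B$ is a subalgebra of $L$. Then $\langle\bar{S},\bar{T}\rangle=L/B$ is clear, while the modular law gives $S\cap(T+B)=(S\cap T)+B=B$, using $B\subseteq S$, so $\bar{S}\cap\bar{T}=0$; hence $L/B$ is complemented. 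Granting the claim of the next paragraph and applying it to $L/B$, it follows that $N(L/B)$ is abelian for every ideal $B$ of $L$, i.e. $L\in q{\mathcal A}$.

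It remains to show that a solvable complemented Lie algebra $L$ has $N(L)$ abelian. The first step is that $\phi(L)=0$: since $\phi(L)$ is a subalgebra of $L$ (indeed an ideal, by \cite[Lemma 3.4]{bg}), it has a complement $T$ with $\phi(L)\cap T=0$ and $\langle\phi(L),T\rangle=L$; as $\phi(L)$ is the set of non-generators of $L$, this forces $\langle T\rangle=L$, hence $T=L$ and $\phi(L)=\phi(L)\cap T=0$. Next, $N(L)$ is a nilpotent ideal of $L$, so $\phi(N(L))=N(L)^{2}$ — in a nilpotent Lie algebra the maximal subalgebras are exactly the codimension-one subspaces containing the derived algebra — and the inclusion $\phi(N(L))\subseteq\phi(L)$, valid for an ideal of a solvable Lie algebra, gives $N(L)^{2}\subseteq\phi(L)=0$, so $N(L)$ is abelian.

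The main obstacle is this last step — passing from $\phi(L)=0$ to an abelian nilradical — and within it the inclusion $\phi(B)\subseteq\phi(L)$ for an ideal $B$ of a solvable Lie algebra: this is the one input not furnished by the earlier material, and one must quote it in a form valid over an arbitrary field. A Frattini-free alternative is available if one uses from \cite{comp} that ideals of complemented Lie algebras are themselves complemented: then $N(L)$ is a complemented nilpotent Lie algebra, and such an algebra must be abelian, for otherwise one may pick $0\neq z\in Z(N(L))\cap N(L)^{2}$, take a complement $T$ of $\langle z\rangle$ in $N(L)$, observe that $N(L)=Fz\dot{+}T$ with $T$ an ideal of $N(L)$ of codimension one, whence $N(L)^{2}=[T,T]\subseteq T$ and so $z\in T$, contradicting $Fz\cap T=0$. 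Either route brings the whole lemma down to a short, characteristic-free argument once the appropriate structural fact about ideals of solvable (resp. complemented) Lie algebras is in hand.
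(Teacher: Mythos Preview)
Your argument is correct and tracks the paper's proof closely: the paper simply cites \cite[Theorem 2.1(iii)]{comp} for the fact that every factor algebra of a complemented Lie algebra is $\phi$-free, and \cite[Theorem 7.4]{frat} for the implication ``$\phi$-free $\Rightarrow$ abelian nilradical'', whereas you unpack both citations by hand (quotients of complemented are complemented, complemented implies $\phi(L)=0$, then $N(L)^2=\phi(N(L))\subseteq\phi(L)=0$). Your primary route is thus the paper's, with the black boxes opened.

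The one genuinely different ingredient is your alternative for the final step: rather than invoking the inclusion $\phi(B)\subseteq\phi(L)$ for ideals of a solvable algebra (which is exactly what underlies \cite[Theorem 7.4]{frat}), you observe that ideals of complemented algebras are complemented and then argue directly that a complemented nilpotent algebra must be abelian via a central element of the derived subalgebra. This buys you a fully self-contained, characteristic-free proof that sidesteps any Frattini theory for ideals; the paper's version is shorter only because it outsources both steps to the literature.
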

\begin{proof} Since $L$ is complemented, all factor algebras of $L$ are $\phi$-free, by \cite[Theorem 2.1 (iii)]{comp}, and so have abelian nilradical by \cite[Theorem 7.4]{frat}.
\end{proof} 

\begin{lemma}\label{l:hom3} $q{\mathcal A}$ is a homomorph.
\end{lemma}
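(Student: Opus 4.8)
The plan is to unwind the definition and reduce everything to the third isomorphism theorem. Let $L \in q{\mathcal A}$ and let $\theta : L \to M$ be an epimorphism; set $I = \ker \theta$, so that $M \cong L/I$. Since $L$ is solvable, so is $M$, and it remains to check that $N(M/\bar B)$ is abelian for every ideal $\bar B$ of $M$; equivalently, that $N((L/I)/\bar B)$ is abelian for every ideal $\bar B$ of $L/I$.

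First I would invoke the ideal correspondence: every ideal $\bar B$ of $L/I$ is of the form $B/I$ for a unique ideal $B$ of $L$ with $I \subseteq B$. Then the third isomorphism theorem gives $(L/I)/(B/I) \cong L/B$, and hence $N((L/I)/\bar B) \cong N(L/B)$. Since $B$ is an ideal of $L$ and $L \in q{\mathcal A}$, the algebra $N(L/B)$ is abelian by definition, so $N((L/I)/\bar B)$ is abelian as well. As $\bar B$ was an arbitrary ideal of $L/I$, this shows $L/I \in q{\mathcal A}$, and therefore $M \in q{\mathcal A}$.

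There is no real obstacle here: the argument is purely formal, relying only on the bijection between ideals of $L/I$ and ideals of $L$ containing $I$, together with the isomorphism $(L/I)/(B/I)\cong L/B$. This is why, as with Lemmas \ref{l:hom} and \ref{l:hom2}, the proof can reasonably be dispatched with the remark that it is straightforward.

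\begin{proof} This is straightforward.
\end{proof}
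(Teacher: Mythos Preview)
Your argument is correct and is precisely the routine verification the paper has in mind when it writes ``This is clear from the definition.'' There is nothing to add: the ideal correspondence together with $(L/I)/(B/I)\cong L/B$ is exactly the content of that one-line proof.
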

\begin{proof} This is clear from the definition.
\end{proof}

\begin{lemma}\label{l:sub} $q{\mathcal A} \subseteq {\mathcal S_{\infty}}$.
\end{lemma}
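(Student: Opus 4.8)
The plan is to verify condition (v) of Theorem~\ref{t:compsplit} for $L$: that every factor algebra in the lower nilpotent series of $L$ is abelian; by that theorem this is equivalent to $L \in {\mathcal S}_{\infty}$. Recall from Section~1 that the terms $L_i$ of the lower nilpotent series are ideals of $L$ (each $L_{i+1} = (L_i)^{\infty}$ is characteristic in $L_i$, hence an ideal of $L$ by an easy induction), and that $L_i/L_{i+1} = L_i/(L_i)^{\infty}$ is nilpotent by the very definition of the nilpotent residual. Since $L$ is solvable we also have $L_i \subseteq L^{(i)}$, so the series reaches $0$ and there are only finitely many nonzero factors.

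The one observation needed is the following. Fix $i$. Since $L_{i+1}$ is an ideal of $L$ contained in $L_i$, the quotient $L_i/L_{i+1}$ is an ideal of $L/L_{i+1}$, and it is nilpotent; hence $L_i/L_{i+1} \subseteq N(L/L_{i+1})$. Because $L \in q{\mathcal A}$ and $L_{i+1}$ is an ideal of $L$, the nilradical $N(L/L_{i+1})$ is abelian, and therefore so is its subalgebra $L_i/L_{i+1}$. As this holds for every $i$, condition (v) of Theorem~\ref{t:compsplit} is satisfied, and $L \in {\mathcal S}_{\infty}$.

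There is not really a hard step here; the only point requiring a little care is to keep the quotients within reach of the hypothesis. We only know that $q{\mathcal A}$ is a homomorph (Lemma~\ref{l:hom3}); it is not asserted to be closed under passing to ideals or to subalgebras, so one cannot, for instance, argue by replacing $L$ with $L^{\infty}$ and iterating. What makes the argument legitimate is precisely that each $L_{i+1}$ is an ideal of $L$, so that $L/L_{i+1}$ is an epimorphic image of $L$ to which the defining property of $q{\mathcal A}$ applies directly; and note that we need only the containment $L_i/L_{i+1} \subseteq N(L/L_{i+1})$, not equality of these two algebras.
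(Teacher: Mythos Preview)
Your argument is correct and is essentially the same as the paper's: you verify condition (v) of Theorem~\ref{t:compsplit} by observing that each $L_i/L_{i+1}$ is a nilpotent ideal of $L/L_{i+1}$, hence lies in $N(L/L_{i+1})$, which is abelian by the $q{\mathcal A}$ hypothesis. The paper's proof is the same one-line observation, without the extra commentary you provide on why $L_{i+1}$ is an ideal of $L$ and why one cannot simply pass to $L^{\infty}$.
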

\begin{proof} Let $L \in q{\mathcal A}$ and let $L_i$ be the $i^{th}$ term of the lower nilpotent series for $L$. Then $L_i/L_{i+1}$ is a nilpotent ideal of $L/L_{i+1}$ and so is abelian. It follows from Theorem \ref{t:compsplit} that $L \in {\mathcal S_{\infty}}$.
\end{proof}
\bigskip

The following result can be proved in the same way as \cite[Lemma 3.5]{comp}. It is also a generalisation of \cite[Lemma 3.4]{Aalg}.

\begin{lemma}\label{l:ideal} 
Let $L \in q{\mathcal A}$ have derived length $\leq n+1$, and suppose that $L = B \dot{+} C$ where $B = L^{(n)}$ and $C$ is a subalgebra of $L$. If $D$ is an ideal of $L$ then $D = (B \cap D) \dot{+} (C \cap D)$.
\end{lemma}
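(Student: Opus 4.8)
The plan is to induct on the derived length $n+1$ of $L$. If $n = 0$ then $B = L^{(0)} = L$ and $C = 0$, so the statement is trivial. For the inductive step, the key point is that $D \cap B$ and $D \cap C$ should account for all of $D$, so it suffices to show $D \subseteq (B \cap D) + (C \cap D)$, since the sum $B \cap D + C \cap D$ is automatically direct (being contained in $B \dot{+} C$). Take $x \in D$ and write $x = b + c$ with $b \in B = L^{(n)}$ and $c \in C$; I must produce this same decomposition with both summands lying in $D$.

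The natural device is to pass to $L/B = L/L^{(n)}$. By Lemma~\ref{l:hom3} this quotient lies in $q{\mathcal A}$, it has derived length $\leq n$, and the image $\bar C \cong C$ satisfies $\bar C = (L/B)^{(n-1)}$-complement-type hypotheses; more precisely, applying Corollary~\ref{c:decomp} (via Lemma~\ref{l:sub}, $L \in {\mathcal S}_\infty$) we may refine $C = A_{n-1} \dot{+} \cdots \dot{+} A_0$ with $C^{(n-1)} = A_{n-1}$, so that $C$ itself, as an algebra of derived length $\leq n$ that splits over its top derived term, is set up to receive the inductive hypothesis once we know $D \cap C$ is an ideal of $C$ sitting inside the relevant decomposition. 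Working in $L/B$: the image $\bar D = (D+B)/B$ is an ideal of $L/B = \bar C$ (identifying $C$ with its image), so by induction $\bar D = (\bar D \cap \bar C^{(n-1)}) \dot{+} (\text{complementary part})$, and in particular $\bar D$ respects the splitting of $\bar C$ over $\bar C^{(n-1)}$. Pulling back, this tells us that $c \in D + B$ can be adjusted: there is $d \in D$ with $d - c \in B$, hence $b' := d - c \in B$ and also $b' = d - c = (x - b) - c$... one must be careful here, but the upshot is that $c \equiv d \pmod{B}$ for some $d \in D$, so $c - d \in B$ and $x - d = b + (c - d) \in B \cap (D + B)$.

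The remaining and genuinely delicate step is to show that $B \cap (D + B) \cap (\text{the coset of } x)$ actually meets $D$ — equivalently, that $x - d \in B \cap D$, not merely in $B$. Since $x \in D$ and $d \in D$, we do have $x - d \in D$, so in fact $x - d \in B \cap D$ and $d$ is the $C$-part — wait, $d$ need not lie in $C$. This is the real obstacle: arranging simultaneously that the $B$-part lies in $B \cap D$ and the $C$-part lies in $C \cap D$. The resolution, following the pattern of \cite[Lemma 3.5]{comp}, is to choose $d \in D \cap C$ rather than merely $d \in D$: the inductive hypothesis applied inside $C$ (whose ideals, pulled from ideals of $L$, decompose along $C = A_{n-1} \dot{+} \cdots \dot{+} A_0$) gives that the image of $D$ in $C$ — more precisely $D' := (D + B) \cap C$, which one checks is an ideal of $C$ — decomposes, and a short computation using that $B = L^{(n)}$ is central-ish in the relevant factor (via Lemma~\ref{l:int}, $Z(L^{(n)}) \cap L^{(n+1)} = 0$, forcing $B$ abelian and $[B, L^{(n-1)}] = B$) lets us replace $c$ by an element of $D \cap C$ modulo $B \cap D$. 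I expect the bulk of the write-up to be this bookkeeping: verifying $D' = (D+B)\cap C$ is an ideal of $C$, that it lies in the span of the $A_j$'s so the inductive hypothesis applies, and that the correction term can be taken in $B \cap D$ rather than just $B$.
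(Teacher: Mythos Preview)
Your proposal has a genuine gap. The inductive framework you set up does not close: applying the inductive hypothesis to the ideal $(D+B)\cap C$ of $C$ tells you how that ideal decomposes along the derived series of $C$, but it says nothing about whether $(D+B)\cap C$ equals $D\cap C$, which is precisely what must be shown. Your final paragraph gestures at the right ingredient ($[B,L^{(n-1)}]=B$) but never explains how it forces the $B$-component $b=\pi_B(x)$ of an element $x\in D$ to lie in $D$; the phrase ``lets us replace $c$ by an element of $D\cap C$ modulo $B\cap D$'' is exactly the statement to be proved, not a proof of it.

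The paper does not spell out the argument either, but it points to \cite[Lemma 3.5]{comp}, whose method is direct rather than inductive. The idea is this: by Corollary~\ref{c:decomp} and Theorem~\ref{t:compsplit}, $A_{n-1}=C\cap L^{(n-1)}$ is abelian and is a Cartan subalgebra of $L^{(n-1)}$ complementary to $B$, so $B$ is the Fitting-one component of $L^{(n-1)}$ relative to $\mathrm{ad}\,A_{n-1}$. For $x=b+c\in D$ and $c_1,c_2\in A_{n-1}$ one has $[c,c_1]\in C\cap L^{(n-1)}=A_{n-1}$, hence $[[c,c_1],c_2]=0$ and therefore
\[
[[x,c_1],c_2]=[[b,c_1],c_2]\in B\cap D.
\]
Thus $(\mathrm{ad}\,A_{n-1})^2(b)\subseteq B\cap D$. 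Since $B$ is Fitting-one under $A_{n-1}$, so is the quotient $B/(B\cap D)$, and an element killed by $(\mathrm{ad}\,A_{n-1})^2$ in a Fitting-one module is zero; hence $b\in B\cap D$ and $c=x-b\in C\cap D$. No induction on the derived length is needed, and the $q{\mathcal A}$ hypothesis enters only through Lemma~\ref{l:sub} to ensure $L\in{\mathcal S}_\infty$ so that $A_{n-1}$ has the required Cartan property.
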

\bigskip

The next result generalises \cite[Theorem 3.5]{Aalg} and  \cite[Theorem 3.6]{comp}.

\begin{theor}\label{t:nz}
Let $L \in q{\mathcal A}$ be a solvable Lie algebra of derived length $\leq n+1$, let $K$ be an ideal of $L$, and $A$ be a minimal ideal of $L$. Put $N_K/K = N(L/K)$, the nilradical of $L/K$, and $N = N_{\{0\}}$. Then the following hold: 
\begin{itemize}
\item[(i)] $K = (K \cap A_n) \dot{+} (K \cap A_{n-1}) \dot{+} \ldots \dot{+} (K \cap A_0)$ for all ideals $K$ of $L$;
\item[(ii)] $N_K = A_n \oplus (N_K \cap A_{n-1}) \oplus \ldots \oplus (N_K \cap A_0)$; 
\item[(iii)] $Z_{L^{(i)}}((L^{(i)} +K)/K) = N_K \cap A_i$ for each $0 \leq i \leq n$; and
\item[(iv)] $A \subseteq N \cap A_i$ for some $0 \leq i \leq n$.  
\end{itemize}
\end{theor}
\begin{proof} The proofs of (i), (ii) and (iv) are the same as those in \cite[Theorem 3.6 (i), (ii) and (iv)]{comp}.
\par

\noindent (iii) Since the hypothesis is factor algebra closed we can assume that $K = 0$; that is, it suffices to show that $Z(L^{(i)}) = N \cap A_i$. This follows as in \cite[Theorem 3.6 (iii)]{comp}. 
\end{proof}
\bigskip

The following is a straightforward consequence of the above result.

\begin{coro}\label{c:nz} Let $L \in {\mathcal S_{\infty}}$ be a solvable Lie algebra of derived length $\leq n+1$. Then $L \in q{\mathcal A}$ if and only if the following hold for every ideal $K$ of $L$:
\begin{itemize}
\item[(i)] $N_K = A_n \oplus (N_K \cap A_{n-1}) \oplus \ldots \oplus (N_K \cap A_0)$; and 
\item[(ii)] $Z_{L^{(i)}}((L^{(i)} +K)/K) = N_K \cap A_i$ for each $0 \leq i \leq n$,
\end{itemize}
where $N_K/K = N(L/K)$, the nilradical of $L/K$.
\end{coro}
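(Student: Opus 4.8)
The plan is to establish both implications using Theorem~\ref{t:nz}. The forward direction is immediate: if $L \in q{\mathcal A}$, then conditions (i) and (ii) of the corollary are just parts (ii) and (iii) of Theorem~\ref{t:nz}, so nothing remains to be done there. The content of the corollary is therefore the converse: assuming $L \in {\mathcal S}_{\infty}$ together with conditions (i) and (ii) for every ideal $K$, we must deduce that $N(L/K)$ is abelian for every ideal $K$ of $L$.

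For the converse, fix an ideal $K$ of $L$. Because ${\mathcal S}_{\infty}$ is a homomorph (Lemma~\ref{l:hom2}) and the corollary's hypotheses are stated for \emph{every} ideal, I would first reduce to the case $K = 0$: passing to $L/K$, the decomposition of Corollary~\ref{c:decomp} for $L/K$ has abelian pieces $\bar{A}_i$, and condition (i) of the corollary applied to the ideal $K$ says precisely $N = N(L) = \bar{A}_n \oplus (N \cap \bar{A}_{n-1}) \oplus \ldots \oplus (N \cap \bar{A}_0)$ after identification (one should check that the $A_i$'s for $L/K$ are the images of the $A_i$'s for $L$, which follows from the construction in Corollary~\ref{c:decomp} and Lemma~\ref{l:hom2}). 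So it suffices to show $N = N(L)$ is abelian. Now use condition (ii) with $K = 0$: it gives $Z_{L^{(i)}}(L^{(i)}) = Z(L^{(i)}) = N \cap A_i$ for each $i$. Hence each summand $N \cap A_i$ in the decomposition of $N$ is central in $L^{(i)}$, and in particular abelian (it lies in $A_i$, which is already abelian, so this is automatic — the real point is the centrality).

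To finish, I would show $[N \cap A_i,\ N \cap A_j] = 0$ for all $i,j$. Assume without loss $i \leq j$. Then $N \cap A_j \subseteq A_j \subseteq L^{(j)} \subseteq L^{(i)}$, while $N \cap A_i = Z(L^{(i)})$ by condition (ii); since $N \cap A_j \subseteq L^{(i)}$, we get $[N \cap A_i, N \cap A_j] \subseteq [Z(L^{(i)}), L^{(i)}] = 0$. Combined with the fact that each $N \cap A_i$ is itself abelian (being a subspace of the abelian $A_i$), the direct sum decomposition $N = \bigoplus_i (N \cap A_i)$ shows $N$ is abelian. Since $K$ was arbitrary, $N(L/K)$ is abelian for every ideal $K$, i.e. $L \in q{\mathcal A}$.

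The main obstacle I anticipate is the bookkeeping in the reduction to $K = 0$: one must verify carefully that the distinguished abelian subalgebras $A_i$ attached to $L/K$ by Corollary~\ref{c:decomp} are compatible with those attached to $L$ — more precisely, that condition (i) of the present corollary (the ideal-decomposition statement from Theorem~\ref{t:nz}(i), applied with the roles of $K$ and a general ideal appropriately arranged) really does identify $N(L/K)$ with a sum of its intersections with the relevant abelian pieces. Once that identification is in place, the centrality argument via $Z(L^{(i)}) = N \cap A_i$ is short. Everything else is routine, drawing directly on Theorem~\ref{t:nz} and the homomorph property.
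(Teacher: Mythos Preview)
Your proposal is correct, and the forward direction matches the paper exactly (it simply invokes Theorem~\ref{t:nz}). For the converse the paper also treats the matter as immediate, and your central observation --- that condition~(ii) forces $[N_K\cap A_i,\,N_K\cap A_j]\subseteq K$ for $i\le j$, so that $(N_K)^2\subseteq K$ --- is precisely the intended point. The one unnecessary detour is your reduction to $K=0$: you can argue directly for an arbitrary $K$. By the paper's definition of the centraliser, condition~(ii) says exactly that $[N_K\cap A_i,\,L^{(i)}]\subseteq K$; since $N_K\cap A_j\subseteq A_j\subseteq L^{(i)}$ for $j\ge i$, and each $N_K\cap A_i\subseteq A_i$ is already abelian, condition~(i) then yields $(N_K)^2\subseteq K$, i.e.\ $N(L/K)=N_K/K$ is abelian. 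This sidesteps entirely the compatibility-of-decompositions obstacle you anticipated.
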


\section{Completely solvable algebras}
A Lie algebra $L$ is called {\it completely solvable} if $L^2$ is nilpotent. Over a field of characteristic zero every solvable Lie algebra is completely solvable. 

\begin{lemma}\label{l:one} Let $L \in {\mathcal S}_n$, where $n \geq 2$, be a completely solvable Lie algebra. Then $L^{(n)} = 0$.
\end{lemma}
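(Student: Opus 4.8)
The plan is to combine the structural consequence of membership in $\mathcal{S}_n$ recorded in Theorem \ref{t:split} with the complete solvability hypothesis. By the implication $(i)\Rightarrow(ii)$ of Theorem \ref{t:split} we have $[L^{(n)},L^{(n-1)}] = L^{(n)}$. The decisive observation is that, since $n \geq 2$, $L^{(n-1)} \subseteq L^{(1)} = L^2$, and $L^2$ is nilpotent because $L$ is completely solvable; hence the subalgebra $M := L^{(n-1)}$ is itself a nilpotent Lie algebra. This is precisely where the hypothesis $n \geq 2$ enters: for $n = 1$ the subalgebra $L^{(n-1)} = L$ need not be nilpotent at all.

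Having noted that $M = L^{(n-1)}$ is nilpotent, I would examine its lower central series. We have $\gamma_2(M) = [L^{(n-1)},L^{(n-1)}] = L^{(n)}$, and then, using $[L^{(n)},L^{(n-1)}] = L^{(n)}$ from Theorem \ref{t:split}, an immediate induction gives $\gamma_{k+1}(M) = [M,\gamma_k(M)] = [L^{(n-1)},L^{(n)}] = L^{(n)} = \gamma_k(M)$ for every $k \geq 2$, so $\gamma_k(M) = L^{(n)}$ for all $k \geq 2$. Since $M$ is nilpotent, $\gamma_k(M) = 0$ for some $k$, and therefore $L^{(n)} = 0$, as required. (Equivalently, one may phrase this via the nilpotent residual: $(L^{(n-1)})^{\infty} = L^{(n)}$, which is $0$ once $L^{(n-1)}$ is known to be nilpotent.)

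I do not expect a genuine obstacle: once one sees that $L^{(n-1)}$ lies inside the nilpotent ideal $L^2$, the conclusion is a two-line consequence of Theorem \ref{t:split} together with the elementary fact that a nilpotent Lie algebra has stationary-at-zero lower central series. The only points requiring a little care are to invoke $n\geq 2$ at exactly the right moment — to place $L^{(n-1)}$ inside $L^2$ — and to keep in mind that $L^{(n)}$, being a term of the derived series, is an ideal (indeed characteristic), so that it is legitimate to identify it with a term of the lower central series of $M$.
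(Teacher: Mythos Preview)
Your argument is correct but proceeds by a genuinely different route from the paper's. The paper works via Frattini theory: starting from the splitting $L = L^{(n)} \dot{+} B$, it invokes the result from \cite{frat} that $L^{(2)} \subseteq \phi(L)$ whenever $L^2$ is nilpotent, so for $n \geq 2$ one has $L^{(n)} \subseteq \phi(L)$, and then $L = \phi(L) + B = B$ forces $L^{(n)} = 0$. You instead pull the identity $[L^{(n)},L^{(n-1)}] = L^{(n)}$ out of Theorem~\ref{t:split} and combine it with the observation that $L^{(n-1)} \subseteq L^2$ is nilpotent, so its lower central series cannot stabilise at a nonzero term. Your approach is more self-contained---it avoids the external Frattini-theory citation and uses only material already in the paper---while the paper's argument uses the splitting hypothesis directly rather than passing through Theorem~\ref{t:split}.
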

\begin{proof} We have $L = L^{(n)} \dot{+} B$ for some subalgebra $B$ of $L$. Since $L^2$ is nilpotent and $n \geq 2$, $L^{(n)} \subseteq L^{(2)} \subseteq \phi(L)$, by \cite[Corollary 4.2 and section 5]{frat}. Hence $L = \phi(L) + B = B$ and $L^{(n)} = 0$.
\end{proof}
\bigskip

Clearly then completely solvable Lie algebras $L \in {\mathcal S_2}$ are metabelian (that is, $L^{(2)} = 0$) so we get stronger versions of Theorems \ref{t:compsplit} and \ref{t:nz}.

\begin{theor}\label{t:cssplit}
Let $L$ be a completely solvable Lie algebra. Then the following are equivalent:
\begin{itemize}
\item[(i)] $L \in {\mathcal S_1} \cup {\mathcal S_2} = {\mathcal S_{\infty}}$;
\item[(ii)] $L^{(2)} = 0$ and $[L,L^2] = L^2$; and
\item[(iii)] $L^{(2)} = 0$ and the Cartan subalgebras of $L$ are precisely the subalgebras that are complementary to $L^2$.
\end{itemize} 
\end{theor}
\begin{proof} This is straightforward.
\end{proof}
\bigskip

Note that we can't replace ${\mathcal S_2}$ by ${\mathcal S_n}$ for any $n \neq 2$ in the above result, as the following example shows.

\begin{ex} Let $L$ be the four-dimensional Lie algebra spanned by $e_1$, $e_2$, $e_3$, $e_4$ where $[e_1,e_2] = e_3$, $[e_1,e_3] = e_2$ and $[e_2,e_3] = e_4$. Then $L^2 = Fe_2 + Fe_3 + Fe_4$ is nilpotent, $L^{(2)} = Fe_4$ and $L \in {\mathcal S}_n$ for $n \neq 2$, but $L \notin {\mathcal S}_2$. Moreover, $L^{(2)} = Z(L)$ and $Fe_1 + Fe_4$ is a Cartan subalgebra of $L$. 
\end{ex}

\begin{theor}\label{t:csnz}
Let $L \in q{\mathcal A}$ be a completely solvable Lie algebra with $L = L^2 \dot{+} B$ where $B$ is a subalgebra of $L$, let $K$ be an ideal of $L$ and let $A$ be a minimal ideal of $L$. Put $N_K/K = N(L/K)$, the nilradical of $L/K$. Then 
\begin{itemize}
\item[(i)] $K = K \cap L^2 \dot{+} K \cap B$;
\item[(ii)] $N_K = L^2 \oplus Z_B(L/K)$; and
\item[(iii)] $A \subseteq L^2$ and $[A,L] = A$, or $A \subseteq B$ and $A \subseteq Z(L)$ (in which case $\dim A = 1$).
\end{itemize} 
\end{theor}
\begin{proof} This follows easily from Theorem \ref{t:nz}.
\end{proof}

\begin{coro}\label{c:csnz} Let $L \in {\mathcal S_{\infty}}$ be a completely solvable Lie algebra with $L = L^2 \dot{+} B$ where $B$ is a subalgebra of $L$. Then $L \in q{\mathcal A}$ if and only if $N_K = L^2 \oplus Z_B(L/K)$ for all ideals $K$ of $L$, where $N_K/K = N(L/K)$, the nilradical of $L/K$.
\end{coro}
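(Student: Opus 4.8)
The plan is to establish the two implications of the biconditional separately. One of them is simply a restatement of an earlier theorem, and the other comes down to a one-line commutator estimate once the structural shape of $L$ has been pinned down, so the proof should be short, consistent with the corollary being billed as a straightforward consequence of Theorem \ref{t:nz}.

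For the implication $L \in q{\mathcal A} \Rightarrow N_K = L^2 \oplus Z_B(L/K)$ I would simply appeal to Theorem \ref{t:csnz}: under the standing hypotheses of the corollary together with $L \in q{\mathcal A}$, every hypothesis of that theorem is met, and its part (ii) is precisely the stated identity, valid for every ideal $K$.

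For the converse, suppose $N_K = L^2 \oplus Z_B(L/K)$ for every ideal $K$, and fix an ideal $K$; I must show that the nilradical $N(L/K) = N_K/K$ is abelian. The key preliminary step is to observe that, since $L$ is completely solvable and lies in ${\mathcal S}_{\infty} = {\mathcal S}_1 \cup {\mathcal S}_2$, Theorem \ref{t:cssplit} (equivalently Lemma \ref{l:one}) gives $L^{(2)} = 0$, i.e. $L^2$ is abelian. Writing $N_K = L^2 + Z_B(L/K)$ and expanding,
\[
[N_K, N_K] \subseteq [L^2, L^2] + [L^2, Z_B(L/K)] + [Z_B(L/K), Z_B(L/K)].
\]
The first term is zero since $L^2$ is abelian. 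For the remaining two I would invoke the definition of the centraliser: each $x \in Z_B(L/K)$ satisfies $[x, L] \subseteq K$, so $[L^2, Z_B(L/K)] \subseteq [L, Z_B(L/K)] \subseteq K$ and likewise $[Z_B(L/K), Z_B(L/K)] \subseteq K$. Hence $[N_K, N_K] \subseteq K$, so $N_K/K$ is abelian; as $K$ was an arbitrary ideal, $L \in q{\mathcal A}$.

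I do not expect a genuine obstacle here. The converse in fact uses the hypothesis only in the weak form $N_K \subseteq L^2 + Z_B(L/K)$, and the one point that needs care is the reduction $L^{(2)} = 0$, which is exactly what complete solvability plus membership in ${\mathcal S}_{\infty}$ yields through Theorem \ref{t:cssplit}. One could instead route the converse through Corollary \ref{c:nz}, specialising to the two-term decomposition $A_1 = L^2$, $A_0 = B$ available in the metabelian case, but the direct computation above seems shorter and cleaner.
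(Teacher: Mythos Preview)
Your argument is correct. The forward implication is exactly Theorem~\ref{t:csnz}\,(ii), and for the converse your reduction $L^{(2)}=0$ via Theorem~\ref{t:cssplit} followed by the commutator estimate $[N_K,N_K]\subseteq K$ is clean and complete.

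The paper does not spell out a proof of this corollary at all; it is simply recorded after Theorem~\ref{t:csnz} as an evident consequence. Your direct computation is precisely the kind of verification the author presumably had in mind, and the alternative you note---specialising Corollary~\ref{c:nz} to the two-term decomposition $A_1=L^2$, $A_0=B$ in the metabelian case---is the other obvious route. Both are equally short; your choice is arguably the more transparent one since it avoids checking that the single condition here matches the pair of conditions in Corollary~\ref{c:nz}.
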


\begin{theor}\label{t:aalg} Let $L$ be a completely solvable Lie algebra. Then $L \in q{\mathcal A}$ if and only if $L$ is an $A$-algebra.
\end{theor}
\begin{proof} Suppose first that $L \in q{\mathcal A}$ and let $U$ be a maximal nilpotent subalgebra of $L$. Let $L = {\mathcal L}_0 \dot{+} {\mathcal L}_1$ be the Fitting decomposition of $L$ relative to ad\,$U$. Then ${\mathcal L}_1 \subseteq L^2$, so ${\mathcal L}_1$ is an abelian ideal of $L$ and ${\mathcal L}_0 \cong L/{\mathcal L}_1 \in q{\mathcal A}$. We thus have ${\mathcal L}_0 = {\mathcal L}_0^2 \dot{+} C$ where $C$ is an abelian subalgebra of ${\mathcal L}_0$. Moreover, $L^2 = ({\mathcal L}_0 \cap L^2)\dot{+} {\mathcal L}_1$ and 
$$[L,{\mathcal L}_0 \cap L^2] = [{\mathcal L}_0 + {\mathcal L}_1,{\mathcal L}_0 \cap L^2] \subseteq ({\mathcal L}_0 \cap L^2) + L^{(2)} = {\mathcal L}_0 \cap L^2,$$ 
so ${\mathcal L}_0 \cap L^2$ is an ideal of $L$. It follows that $U + ({\mathcal L}_0 \cap L^2)$ is a nilpotent subalgebra of $L$ and so ${\mathcal L}_0 \cap L^2 \subseteq U \cap L^2$. The reverse inclusion is clear, whence ${\mathcal L}_0 \cap L^2 = U \cap L^2$.
\par

Now $L^2 = {\mathcal L}_0^2 \oplus [{\mathcal L}_0,{\mathcal L}_1]$, so ${\mathcal L}_0^2 = {\mathcal L}_0 \cap L^2 \subseteq U$. Thus $U = {\mathcal L}_0^2 + U \cap C$ is an ideal of ${\mathcal L}_0$ and so is in the nilradical of ${\mathcal L}_0$. It follows that $U$ is abelian and $L$ is an $A$-algebra.
\par

The converse is clear.
\end{proof}
\bigskip

The above result does not hold for all solvable Lie algebras, as complemented Lie algebras belong to $q{\mathcal A}$ and these are not all $A$-algebras (see, \cite[Example 3.9]{comp}).
\par

The classes ${\mathcal S_{\infty}}$ and $q{\mathcal A}$ are different even in the case of completely solvable Lie algebras, as the following example shows.

\begin{ex}\label{e:1} Let $L = Fx_1 + Fx_2 + Fx_3 + Fx_4$ with $[x_4,x_1] = x_1$, $[x_4,x_2] = x_2$ and $[x_3,x_1] = x_2$, other products being zero. Then $L^2 = Fx_1 + Fx_2$, so $L \in {\mathcal S_{\infty}}$. However, $N = L^2 + Fx_3$, which is not abelian, and so $L \notin q{\mathcal A}$. Notice that $\phi(L) = Fx_2$.
\end{ex}

The two classes do coincide for $\phi$-free algebras as the next result shows. The {\em abelian socle}, Asoc$L$, of $L$ is the sum of the minimal abelian ideals of $L$.

\begin{theor}\label{t:phi}
Let $L$ be a completely solvable Lie algebra. Then  
\begin{itemize}
\item[(i)] $L$ is $\phi$-free only if $L \in q{\mathcal A}$; and
\item[(ii)] $L$ is $\phi$-free if and only if $L^2 \subseteq $ Asoc$L$ and $L \in {\mathcal S_{\infty}}$.
\end{itemize}
\end{theor}
\begin{proof} (i) Suppose that $L$ is $\phi$-free. Then $L$ is complemented by \cite[Theorem 2.2]{comp} and so $L \in q{\mathcal A}$ by Lemma \ref{l:comp}. 
\par
(ii) Suppose first that $L$ is $\phi$-free. Then $L^2 \subseteq N = $ Asoc$L$, by \cite[Theorem 7.4]{frat}. Also $L \in {\mathcal S_{\infty}}$ by \cite[Lemma 7.2]{frat}. 
\par
So suppose now that $L^2 \subseteq $Asoc$L$ and $L \in {\mathcal S_{\infty}}$. Then $L$ splits over Asoc$L$ by Theorem \ref{t:split}. But now $L$ is $\phi$-free by \cite[Theorem 7.3]{frat}.
\end{proof}
\bigskip

Part (i) of the above result does not hold if $L$ is not completely solvable, as the following example shows.

\begin{ex}\label{e:2} Let $L$ be as in Example \ref{e:1} above, considered over a field of characteristic $p$, let $B$ be a faithful completely reducible $L$-module and put $X = B \dot{+} L$ where $B^2 = 0$ and $L$ acts on $B$ under the given $L$-module action. Then $B \subseteq$ Asoc$X$, since $B$ is completely reducible as an $X$-module. Furthermore, $N(X) \subseteq C_X(B) = B$ since $B$ is faithful. It follows that $X$ splits over Asoc$X = N(X) = B$, and hence that $\phi(X) = 0$, by \cite[Theorem 7.3]{frat}. Moreover, $X^2 = B \dot{+} Fx_1 + Fx_2$ and $X^{(2)} = B$, so $X \in {\mathcal S_{\infty}}$. However, $X \notin q{\mathcal A}$ since $L \notin q{\mathcal A}$.
\end{ex} 

Notice further that if $X$ is as in Example \ref{e:2} then $X \in {\mathcal S_{\infty}}$ and (ii) and (iii) of Theorem \ref{t:nz} hold with $K = 0$, but $X \notin q{\mathcal A}$. 

\section{$\phi$-free $q{\mathcal A}$-algebras}
Following on fron Theorem \ref{t:phi}, in this section we seek a characterisation of $\phi$-free $q{\mathcal A}$-algebras. A Lie algebra $L$ is called {\em monolithic} if it has a unique minimal ideal $W$, the {\em monolith} of $L$. Then the following result can be proved in the same way as \cite[Theorem 5.1]{Aalg}.

\begin{theor}\label{t:mon}
Let $L \in q{\mathcal A}$ be a monolithic solvable Lie algebra of derived length $n+1$ with monolith $W$. Then, with the same notation as Corollary \ref{c:decomp}, 
\begin{itemize}
\item[(i)] $W$ is abelian;
\item[(ii)] $Z(L) = 0$ and $[L,W] = W$; 
\item[(iii)] $N = A_n = L^{(n)}$; 
\item[(iv)] $N = Z_L(W)$; and
\item[(v)] $L$ is $\phi$-free if and only if $W = N$.
\end{itemize} 
\end{theor}

\begin{coro}\label{c:csmon} Let $L \in q{\mathcal A}$ be a monolithic supersolvable Lie algebra. Then $L = L^2 \dot{+} Fx$ where $($ad\,$x)\mid_{L^2}$ is triangulable.
\end{coro}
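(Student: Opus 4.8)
The plan is to combine Theorem \ref{t:mon} with the structure of supersolvable Lie algebras. Since $L$ is monolithic and lies in $q{\mathcal A}$, Theorem \ref{t:mon} tells us that the monolith $W$ is abelian, that $N = Z_L(W)$, and that $[L,W] = W$ with $Z(L) = 0$. Being supersolvable, $L$ has a nilpotent $L^2$, so in particular $L$ is completely solvable; hence by Theorem \ref{t:aalg} we know $L$ is actually an $A$-algebra. Now I would invoke Corollary \ref{c:decomp} (applicable since $q{\mathcal A} \subseteq {\mathcal S_\infty}$ by Lemma \ref{l:sub}): $L$ splits over $L^2$, so $L = L^2 \dot{+} B$ for some subalgebra $B$. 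The key point is to show that $B$ can be taken one-dimensional.

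The main step is to argue that $\dim B = 1$. Here I would use supersolvability together with the monolithic hypothesis. In a supersolvable Lie algebra every chief factor is one-dimensional; moreover $B \cong L/L^2$ is abelian, and $B$ is in fact a Cartan subalgebra of $L$ (by Theorem \ref{t:cssplit}, since completely solvable members of ${\mathcal S_\infty}$ are metabelian and the complements of $L^2$ are precisely the Cartan subalgebras). For a monolithic supersolvable algebra one expects the Cartan subalgebras to be one-dimensional: indeed, if $\dim B \geq 2$ then, because $B$ is abelian and acts on the abelian ideal $L^2$, one can decompose $L^2$ into weight spaces for $B$; a non-trivial common kernel of $B$ acting on $L^2$ would, together with the fact that $L^2$ contains the monolith $W$ and $[L,W]=W$, force either an extra central element (contradicting $Z(L)=0$) or a second minimal ideal (contradicting monolithicity). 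So some one-dimensional subspace $Fx$ of $B$ already acts on $L^2$ with the same kernel as all of $B$, and then $Fx \dot{+} L^2$ is an ideal whose complement in $L$ would contradict minimality unless $B = Fx$. I would write this out carefully, exploiting that $W \subseteq L^2 \cap N$ and that $N = Z_L(W) = A_n = L^{(n)}$ from Theorem \ref{t:mon}(iii),(iv) to pin down that the action of $B$ on $L^2$ cannot split off a proper direct summand.

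Finally, once $L = L^2 \dot{+} Fx$, the triangulability of $(\mathrm{ad}\,x)\mid_{L^2}$ is exactly the statement that the one-dimensional subalgebra $Fx$ acts on the nilpotent ideal $L^2$ by a triangulable operator; this is immediate from supersolvability of $L$, since in a supersolvable Lie algebra every element acts triangulably on every ideal (there is a chain of ideals of $L$, all contained in $L^2$, with one-dimensional quotients, refining to a complete flag in $L^2$ stabilised by $\mathrm{ad}\,x$). I would state this as the closing line. The main obstacle is the middle paragraph: showing the complement is forced to be one-dimensional, which requires the interplay between the weight-space decomposition of $L^2$ under the Cartan subalgebra $B$, the monolithic hypothesis, and $Z(L) = 0$; the rest is assembly of already-established results. $\Box$
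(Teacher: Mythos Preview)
Your setup and closing are fine, but the middle paragraph---the only nontrivial step---is both overcomplicated and, as written, not an argument. The sentence ``some one-dimensional subspace $Fx$ of $B$ already acts on $L^2$ with the same kernel as all of $B$, and then $Fx \dot{+} L^2$ is an ideal whose complement in $L$ would contradict minimality unless $B = Fx$'' does not make sense: you have not said minimality of what, $L$ is not assumed complemented, and in any case $Fx + L^2$ is automatically an ideal of $L$ for \emph{every} $x\in B$ (since $B$ is abelian and $L^2$ is an ideal), so no contradiction arises this way. The weight-space discussion can be salvaged, but only by observing that any $b\in B$ annihilating $W$ lies in $Z_L(W)=N=L^2$, hence in $B\cap L^2=0$; you never isolate this point.

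The paper bypasses all of this with a one-line dimension count. Supersolvability gives $\dim W=1$. By Theorem~\ref{t:mon}(iii),(iv) (together with Lemma~\ref{l:one}, which forces the derived length to be $2$, so $L^{(n)}=L^2$) we have $N=L^2=Z_L(W)$. Then $L/L^2=L/Z_L(W)$ embeds in $gl(W)\cong F$, so $\dim L/L^2\le 1$; solvability rules out $L=L^2$, giving $B=Fx$. The triangulability of $(\mathrm{ad}\,x)|_{L^2}$ then follows from supersolvability exactly as you say. You had all the ingredients from Theorem~\ref{t:mon} on the table but missed that $\dim W=1$ immediately bounds the codimension of its centraliser.
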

\begin{proof} We have $L = L^2 \dot{+} B$ for some abelian subalgebra $B$ of $L$. Also $\dim W = 1$ and so $\dim L/L^2 = \dim L/N = \dim L/Z_L(W) \leq 1$. It follows that $L = L^2 \dot{+} Fx$, and $($ad\,$x)\mid_{L^2}$ is triangulable since $L$ is supersolvable.
\end{proof}

\begin{coro}\label{c:phifreemon} Let $L \in q{\mathcal A}$ be a $\phi$-free monolithic solvable Lie algebra of index $n+1$. Then $L = L^{(n)} \rtimes B$ is the semidirect product of $L^{(n)}$ and a solvable Lie algebra $B \in q{\mathcal A}$ of index $n$.
\end{coro}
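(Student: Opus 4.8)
The plan is to read off from Theorem~\ref{t:mon} that, for a $\phi$-free monolithic $L \in q{\mathcal A}$ of derived length $n+1$ with monolith $W$, the nilradical, the monolith and $L^{(n)}$ all coincide. Indeed, part (iii) of that theorem gives $N = A_n = L^{(n)}$, and part (v) says that $\phi$-freeness forces $W = N$; hence $L^{(n)} = W = N$ is an abelian minimal ideal of $L$. (Part (i) of Theorem~\ref{t:mon} already records that $W$, and so $L^{(n)}$, is abelian.)

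Next I would produce the splitting. By Lemma~\ref{l:sub} we have $q{\mathcal A} \subseteq {\mathcal S}_{\infty}$, so $L$ splits over each term of its derived series, in particular over $L^{(n)}$: write $L = L^{(n)} \dot{+} B$ with $B$ a subalgebra of $L$. Since $L^{(n)}$ is an ideal of $L$ and $L^{(n)} \cap B = 0$, this decomposition is exactly a semidirect product $L = L^{(n)} \rtimes B$.

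It remains to identify $B$. Because $B \cong L/L^{(n)}$ and $q{\mathcal A}$ is a homomorph by Lemma~\ref{l:hom3}, we get $B \in q{\mathcal A}$. For the derived length: $B^{(n)}$ maps onto $(L/L^{(n)})^{(n)} = 0$, so $B^{(n)} \subseteq L^{(n)} \cap B = 0$, giving derived length at most $n$; on the other hand $B^{(n-1)} \cong L^{(n-1)}/L^{(n)}$, which is nonzero, since $L^{(n-1)} = L^{(n)}$ would force $L^{(n)} = [L^{(n-1)}, L^{(n-1)}] = [L^{(n)}, L^{(n)}] = L^{(n+1)} = 0$, contradicting that $L$ has derived length $n+1$. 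Hence $B$ has derived length exactly $n$, which completes the argument.

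The only delicate point — really a bookkeeping matter rather than a genuine obstacle — is checking that $B$ has derived length precisely $n$ and not merely at most $n$; this rests on the observation above that the top two terms of the derived series of a solvable Lie algebra cannot coincide. Everything else is an immediate consequence of results already in hand, principally Theorem~\ref{t:mon}, the inclusion $q{\mathcal A} \subseteq {\mathcal S}_{\infty}$, and the fact that $q{\mathcal A}$ is a homomorph.
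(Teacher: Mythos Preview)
Your argument is correct and follows essentially the same line as the paper: both rest on Theorem~\ref{t:mon} (parts (iii)--(v)) to identify $L^{(n)} = N = W$, and on the splitting of $L$ over $L^{(n)}$. You are more explicit than the paper in justifying $B \in q{\mathcal A}$ via Lemma~\ref{l:hom3} and in verifying that $B$ has derived length exactly $n$; the paper, for its part, adds the observation that $L^{(n)} = Z_L(L^{(n)})$, so that $B \cong L/Z_L(L^{(n)})$ is realised as an irreducible subalgebra of $gl(L^{(n)})$, a remark you do not need but which gives a concrete description of $B$.
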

\begin{proof} Simply note that $L^{(n)} = Z_L(L^{(n)})$, by Theorem \ref{t:mon}, and $L = L^{(n)} \dot{+} B$ where $B \cong L/Z_L(L^{(n)})$, which is isomorphic to an irreducible subalgebra of $gl(L^{(n)})$.
\end{proof}
\bigskip

The next result gives us a way of constructing a $\phi$-free monolithic $q{\mathcal A}$ algebra of index $n+1$ from a $q{\mathcal A}$ algebra of index $n$. 

\begin{theor}\label{t:phifreemon} Let $B \in q{\mathcal A}$ be a solvable Lie algebra and let $A$ be an irreducible $B$-module. Then $L = A \rtimes B$ is a solvable Lie algebra with $L \in q{\mathcal A}$. Moreover, if $A$ is faithful and $B$ has index $n$, then $L$ is $\phi$-free, monolithic and has index $n+1$.
\end{theor}
\begin{proof} Let $K$ be an ideal of $L$, and let $N_K/K = N(L/K)$. Suppose first that $A \subseteq K$. Then $N_K/K \cong (N_K/A)/(K/A)$ which yields that $(N_K)^2 \subseteq K$ since $L/A \in q{\mathcal A}$.
\par

So suppose now that $A \not \subseteq K$. Then $[N_K,A] = A$ implies that $A \subseteq (N_K)^r$ for every $r \geq 2$, and hence that $A \subseteq K$, a contradiction. It follows that $[N_K,A] = 0$, from the irreducibility of $A$. Clearly $(N_K \cap B)^2 \subseteq K \cap B$, since $B \in q{\mathcal A}$. Hence $(N_K)^2 = (A + N_K \cap B)^2 \subseteq K$. We therefore have that $L \in q{\mathcal A}$.
\par

Next assume that $A$ is faithful and $B$ has index $n$.  Clearly $L^{(i)} = A + B^{(i)}$ for $1 \leq i \leq n$, so $L$ has index $n+1$. If $K$ is an ideal of $L$, then $A \cap K = 0$ or $A \cap K = A$ from the irreducibility of $A$. The former implies that $[A,K] = 0$, so $K \subseteq Z_L(A) = A$, a contradiction. It follows that $A \subseteq K$ and $A$ is the monolith of $L$. If $\phi(L)$ is non-zero then $A \subseteq \phi(L)$ which yields $L = \phi(L) + B = B$, a contradiction. Hence $L$ is $\phi$-free.
\end{proof}
\bigskip

Our final result gives the characterisation of $\phi$-free $q{\mathcal A}$ algebras that we are seeking.

\begin{theor} Let $L$ be a solvable Lie algebra of with nilradical $N$. Then the following are equivalent:
\begin{itemize}
\item[(i)] $L \in q{\mathcal A}$ and is $\phi$-free; and
\item[(ii)] $N$ is abelian and $L = N \rtimes B$, where $B \in q{\mathcal A}$ and $N$ is a completely reducible $B$-module.
\end{itemize}
Furthermore, $L$ has index $n+1$ if $B$ has index $n$ and $N$ has a faithful irreducible submodule.
\end{theor}
\begin{proof} (i) $\Rightarrow$ (ii): Assume (i). Since $L$ is $\phi$-free, $N =$ Asoc\,$L$ and $L = N \dot{+} B$, where $B$ is a subalgebra of $L$, by \cite[Theorems 7.3 and 7.4]{frat}. Moreover, $Z_L(N) = N$, so $B \cong L/Z_L(N)$ is isomorphic to a subalgebra of $gl(N)$ and $N$ is faithful. Also, $N$ is completely reducible, since $N =$ Asoc\,$L$. Finally, if $A$ is a faithful irreducible submodule of $N$, then $L^{(i)} \supseteq A + B^{(i)}$ for $1 \leq i \leq n$, and so $L$ has index one greater than that of $B$.
\par

\noindent (ii) $\Rightarrow$ (i): So now suppose that (ii) holds. Since $N$ is abelian and completely reducible, $N =$ Asoc\,$L$ and $L$ is $\phi$-free, by \cite[Theorem 7.3]{frat}. 
\par

Let Asoc\,$L = A_1 \oplus \ldots \oplus A_r$, where $A_i$ is a minimal ideal of $L$ and put $L_0 = B$, $L_i = A_1 \oplus \ldots \oplus A_i + B$ for $1 \leq i \leq r$. Then $A_i$ is an irreducible  $L_{i-1}$-module for $1 \leq i \leq r$. It follows from Theorem \ref{t:phifreemon} and a simple induction argument that $L \in q{\mathcal A}$. 
\par

If $B$ has index $n$ and $N$ has a faithful irreducible submodule, then $L$ has index $n+1$ as above.
\end{proof}
\bigskip

\end{document}